\theoremstyle{plain}
\newtheorem{thm}{Theorem}
\newtheorem{lemma}[thm]{Lemma}
\newcommand{\N}{\mathbb{N}}
\newcommand{\R}{\mathbb{R}}
\DeclareMathOperator{\lip}{Lip\,\!}
\begin{document}

\title[]{Convex $C^1$ extensions of $1$-jets from compact subsets of Hilbert spaces}

\author{Daniel Azagra}
\address{ICMAT (CSIC-UAM-UC3-UCM), Departamento de An{\'a}lisis Matem{\'a}tico y Matem\'atica Aplicada,
Facultad Ciencias Matem{\'a}ticas, Universidad Complutense, 28040, Madrid, Spain. }
\email{azagra@mat.ucm.es}

\author{Carlos Mudarra}
\address{Aalto University, Department of Mathematics and Systems Analysis, P.O. BOX 11100, FI-00076 Aalto, Finland}
\email{carlos.mudarra@aalto.fi}

\date{November 8, 2019}

\keywords{Convex function, Whitney extension theorem, Hilbert space}

\subjclass[2010]{26B05, 26B25, 52A05, 52A20. }

\thanks{D. Azagra and C. Mudarra were partially supported by Grant MTM2015-65825-P. C. Mudarra also acknowledges financial support from the Academy of Finland.}

\begin{abstract}
Let $X$ denote a Hilbert space. Given a compact subset $K$ of $X$ and two continuous functions $f:K\to\R$, $G:K\to X$, we show that a necessary and sufficient condition for the existence of a convex function $F\in C^1(X)$ such that $F=f$ on $K$ and $\nabla F=G$ on $K$ is that the $1$-jet $(f, G)$ satisfies:
\begin{enumerate}
\item $f(x)\geq f(y)+ \langle G(y), x-y\rangle$ for all $x, y\in K$, and
\item if $x, y\in K$ and $f(x)= f(y)+ \langle G(y), x-y\rangle$ then $G(x)=G(y)$.
\end{enumerate} 
We also solve a similar problem for $K$ replaced with an arbitrary bounded subset of $X$, and for $C^1(X)$ replaced with the class $C^{1,u}_{b}(X)$ of differentiable functions with uniformly continuous derivatives on bounded subsets of $X$.
\end{abstract}

\maketitle

In \cite{AzagraMudarra2017PLMS}, among other results, we showed the following.

\begin{thm}\label{C1CompactRn}
If $K$ is a compact subset of $\R^n$ and $f:K\to\R$, $G:K\to \R^n$ are continuous functions, then a necessary and sufficient condition for the existence of a convex function $F\in C^1(\R^n)$ such that $F=f$ on $K$ and $\nabla F=G$ on $K$ is that the $1$-jet $(f, G)$ satisfies:
\begin{enumerate}
\item[($C$)] $f(x)\geq f(y)+ \langle G(y), x-y\rangle$ for all $x, y\in K$, and
\item[($CW^1$)] if $x, y\in K$ and $f(x)= f(y)+ \langle G(y), x-y\rangle$ then $G(x)=G(y)$.
\end{enumerate} 
\end{thm}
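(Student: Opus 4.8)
The plan is to treat necessity directly and sufficiency by first building a canonical convex extension and then smoothing it. \emph{Necessity} is immediate: if a convex $F\in C^1(\R^n)$ satisfies $F=f$ and $\nabla F=G$ on $K$, then the supporting-hyperplane inequality $F(x)\geq F(y)+\langle\nabla F(y),x-y\rangle$ for convex $F$ restricts on $K$ to $(C)$. For $(CW^1)$, suppose $f(x)=f(y)+\langle G(y),x-y\rangle$ with $x,y\in K$. Then the affine map $L(z)=F(y)+\langle\nabla F(y),z-y\rangle$ touches the graph of $F$ at both $y$ and $x$; since $F\geq L$ everywhere and $F(x)=L(x)$, $L$ is also a supporting affine function of $F$ at $x$, so $\nabla F(x)=\nabla L=\nabla F(y)$, i.e. $G(x)=G(y)$.

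For \emph{sufficiency}, the natural candidate is the minimal convex extension compatible with the jet,
\[
g(x)=\sup_{y\in K}\bigl\{f(y)+\langle G(y),x-y\rangle\bigr\}.
\]
Writing $A_y(x)=f(y)+\langle G(y),x-y\rangle$, the function $g$ is a supremum of affine functions, hence convex; compactness of $K$ together with continuity of $f$ and $G$ makes $g$ finite and the supremum attained. By $(C)$ we have $g\leq f$ on $K$, and taking $y=x$ gives $g\geq f$ on $K$, so $g=f$ on $K$ and each $A_y$ supports $g$ at $y$, whence $G(y)\in\partial g(y)$.

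The role of $(CW^1)$ is to force differentiability on $K$. Fix $x_0\in K$ and let $M(x_0)=\{y\in K: A_y(x_0)=g(x_0)\}$ be the (compact) set of maximizers. The classical formula for the subdifferential of a supremum of affine functions over a compact, continuously parametrized family gives $\partial g(x_0)=\conv\{G(y):y\in M(x_0)\}$. For any $y\in M(x_0)$ we have $f(x_0)=g(x_0)=f(y)+\langle G(y),x_0-y\rangle$, so $(CW^1)$ (with the pair $x=x_0$) yields $G(y)=G(x_0)$. Thus $\partial g(x_0)=\{G(x_0)\}$ is a singleton, and $g$ is differentiable at $x_0$ with $\nabla g(x_0)=G(x_0)$. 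Since $G$ is continuous on the compact set $K$, this differentiability is moreover uniform in a quantitative sense.

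The \textbf{main obstacle} is the final smoothing: $g$ is convex and matches the $1$-jet on $K$ and is differentiable there, but it is in general only Lipschitz and non-$C^1$ off $K$, possibly at points arbitrarily close to $K$. One must upgrade $g$ to a globally $C^1$ convex $F$ \emph{without} perturbing $F=f$ and $\nabla F=G$ on $K$; the trouble is that all the standard convexity-preserving regularizers (mollification, Moreau--Yosida/infimal convolution, Lasry--Lions sup-inf convolution) move values and gradients on $K$. The plan is to exploit the uniform first-order approximation of $g$ by its tangent planes along the compact set $K$ (a consequence of the differentiability above plus uniform continuity of $G$): this provides a modulus controlling $g(x)-g(y)-\langle G(y),x-y\rangle$ for $x$ near $y\in K$, which is exactly the room needed to combine a convex $C^{1,1}$ regularization $g_\lambda\leq g$ with an explicit convex correction that restores the exact jet along $K$. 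Equivalently, since $G$ admits a modulus of continuity on $K$, one can approximate $(f,G)$ by $C^{1,\omega}$-compatible jets, invoke the explicit $C^{1,\omega}$ convex extension construction, and pass to the limit, using compactness of $K$ to guarantee convergence to a convex $C^1$ extension. I expect this regularization step, rather than the envelope or the subdifferential computation, to carry essentially all of the technical weight of the proof.
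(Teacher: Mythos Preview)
Your necessity argument is correct. The gap is in sufficiency: you correctly build the minimal convex extension $m(x)=\sup_{y\in K}A_y(x)$ (your $g$) and use $(CW^1)$ to show $\partial m(x_0)=\{G(x_0)\}$ for $x_0\in K$, but then you stop precisely where the real work begins. Your treatment of the smoothing step is a list of strategies that you yourself note do not directly apply; the phrases ``explicit convex correction that restores the exact jet'' and ``pass to the limit'' are not backed by any concrete construction. Combining a regularization $m_\lambda\leq m$ with a ``correction'' is exactly the difficulty: any convex correction that lifts values back up on $K$ will in general destroy the $C^1$ regularity you just obtained, and a limit of $C^{1,\omega}$ convex extensions need not be $C^1$.

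The paper (via its Theorem~\ref{C1ubBoundedSubsetsHilbertSpace}, which specializes to the present statement) avoids smoothing altogether. It uses $(CW^1)$ together with compactness not merely to get pointwise differentiability of $m$ on $K$, but to extract a \emph{global} modulus: with $\psi_y(x)=m(x)-A_y(x)$, one proves that
\[
\omega_0(t)=\sup\Bigl\{\tfrac{\psi_y(x)}{|x-y|}:0<|x-y|\leq t,\ y\in K,\ x\in X\Bigr\}\longrightarrow 0\quad\text{as }t\to 0^+.
\]
Taking $\omega_1$ the concave envelope of $\omega_0$ and $\varphi_1(t)=\int_0^t\omega_1$, one defines
\[
g(x)=\inf_{y\in K}\bigl\{A_y(x)+2\varphi_1(|x-y|)\bigr\},\qquad F=\conv(g).
\]
The calibration $2\varphi_1(t)\geq t\,\omega_0(t)$ forces $A_y+2\varphi_1(|\cdot-y|)\geq m$, hence $m\leq F\leq g$ and $F=f$, $\nabla F=G$ on $K$; the infimum structure gives a uniform second-difference bound $g(x+h)+g(x-h)-2g(x)\leq 2\varphi_1(2|h|)$, which passes to $F=\conv(g)$ and yields $F\in C^{1,\omega_1}$ directly, with no limiting procedure. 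Your instinct that a modulus should emerge from $(CW^1)$ is right; the decisive idea you are missing is to use that modulus to build $F$ \emph{from above} via an infimum-plus-convex-envelope construction, rather than to try to smooth $m$.
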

Gilles Godefroy asked whether this statement should remain true if we replace $\R^n$ with a Hilbert space $X$. The purpose of this note is to give an affirmative answer to this question. 

We refer to the introductions and the bibliography of \cite{AzagraMudarra2017PLMS, AzagraMudarraExplicitFormulas, AzagraMudarraGlobalGeometry} for motivation, insight and general reference about this kind of problems. Let us only mention that if one wants to replace $K$ with a closed set in Theorem \ref{C1CompactRn} then it is necessary to introduce more sophisticated conditions, see \cite[Theorems 1.8 and 1.13]{AzagraMudarraGlobalGeometry}. Taking into account the difficulties that infinite dimensions add (such as the lack of local compactness and the existence of continuous convex functions which are not bounded on bounded sets), one can expect that even much more complicated conditions would be required to deal with the general case of a $1$-jet $(f,G)$ defined on a noncompact closed set $E$ of a Hilbert space $X$. However, for a compact $E\subset X$, the result is as easy as in $\R^n$.

\begin{thm}\label{C1CompactHilbertSpace}
Let $X$ denote a Hilbert space. Given a compact subset $K$ of $X$ and two continuous functions $f:K\to\R$, $G:K\to X$, a necessary and sufficient condition for the existence of a convex function $F\in C^1(X)$ such that $(F, \nabla F)=(f, G)$ on $K$ is that the $1$-jet $(f, G)$ satisfies:
\begin{enumerate}
\item[($C$)] $f(x)\geq f(y)+ \langle G(y), x-y\rangle$ for all $x, y\in K$, and
\item[($CW^1$)] if $x, y\in K$ and $f(x)= f(y)+ \langle G(y), x-y\rangle$ then $G(x)=G(y)$.
\end{enumerate} 
Furthermore, whenever these conditions are satisfied, the extension $F$ can be taken to be Lipschitz, with $\textrm{Lip}(F)\leq 5\max_{z\in K}|G(z)|$.
\end{thm}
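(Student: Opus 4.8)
The plan is to establish necessity by a direct convexity argument, and then to prove sufficiency by reproducing, in the Hilbert space $X$, the construction behind Theorem~\ref{C1CompactRn}, checking that the compactness of $K$ alone (rather than local compactness of the ambient space) is what drives every step.

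For necessity, assume a convex $F\in C^1(X)$ with $(F,\nabla F)=(f,G)$ on $K$ exists. Convexity and differentiability give $F(x)\ge F(y)+\langle\nabla F(y),x-y\rangle$ for all $x,y\in X$, which restricted to $K$ is exactly~(C). If $x,y\in K$ saturate this inequality, then the nonnegative convex function $z\mapsto F(z)-f(y)-\langle G(y),z-y\rangle$ vanishes at the two endpoints of the segment $[y,x]$, hence on all of it; thus the affine map $z\mapsto f(y)+\langle G(y),z-y\rangle$ supports $F$ at every interior point $z_0$ of $[y,x]$, forcing $\nabla F(z_0)=G(y)$, and letting $z_0\to x$ with $\nabla F$ continuous yields $G(x)=G(y)$, i.e.~$(CW^1)$.

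For sufficiency I would work with the minimal convex extension $m(x)=\sup_{y\in K}\{f(y)+\langle G(y),x-y\rangle\}$. This $m$ is convex, satisfies $\textrm{Lip}(m)\le\max_K|G|$, and by (C) equals $f$ on $K$ with $G(x)\in\partial m(x)$ there. The key computation is that $(CW^1)$ together with the compactness of $K$ forces $\partial m(x)=\{G(x)\}$ at every $x\in K$: writing $\delta(y)=m(x)-f(y)-\langle G(y),x-y\rangle\ge0$, any $y_t$ almost attaining $\sup_{y}\{\langle G(y),h\rangle-\delta(y)/t\}$ must satisfy $\delta(y_t)=O(t)$, so a compactness extraction gives a limit $y_\ast$ with $\delta(y_\ast)=0$; then $(CW^1)$ yields $G(y_\ast)=G(x)$ and hence the one-sided derivative $m'(x;h)=\langle G(x),h\rangle$ for every $h$. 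Thus $m$ is G\^ateaux differentiable on $K$ with $\nabla m=G$ there, and since $G$ is continuous this derivative is continuous along $K$. Reducing first to the separable subspace $Y=\overline{\operatorname{span}}(K\cup G(K))$ and composing the eventual extension with the orthogonal projection $X\to Y$ --- which preserves convexity, the $C^1$ class, the $1$-jet on $K$, and the Lipschitz constant --- is a convenient preliminary simplification, though $m$ and all the estimates above make sense directly on $X$.

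The main obstacle is the final step: $m$ is only differentiable on $K$, not globally of class $C^1$, so it must be regularized into a convex $F\in C^1(X)$ that still satisfies $(F,\nabla F)=(f,G)$ exactly on $K$ and obeys $\textrm{Lip}(F)\le5\max_K|G|$. I expect this to be carried out exactly as in the proof of Theorem~\ref{C1CompactRn}: one rounds off the corners of $m$ by an inf/sup-convolution type smoothing that is made inactive on a neighbourhood of $K$, the neighbourhood and the finitely many smoothing patches being furnished by the compactness of $K$ in place of the local compactness of $\mathbb{R}^n$. Because $m$ already coincides to first order with the jet on the compact set $K$, the smoothing can be arranged not to disturb the values or gradients there, while the gap between the sharp Lipschitz constant $\max_K|G|$ of $m$ and the stated bound $5\max_K|G|$ is absorbed by the regularization. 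Verifying that this localization and the attendant constant go through unchanged in infinite dimensions, using only the compactness of $K$, is the heart of the matter.
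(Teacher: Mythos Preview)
Your necessity argument and your analysis of the minimal extension $m$ are both fine, and the observation that compactness of $K$ together with $(CW^1)$ forces $\partial m(x)=\{G(x)\}$ on $K$ is essentially what the paper uses at the very end. The reduction to a separable closed span is harmless, though the paper does not need it.

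The genuine gap is the last step. You have not actually carried out the regularization of $m$ into a convex $F\in C^1(X)$; you only assert that the construction from the $\R^n$ proof should go through with ``finitely many smoothing patches'' replacing uses of local compactness. This is precisely the part where the Hilbert-space proof differs from what you sketch. The paper does \emph{not} patch locally. Instead it extracts, from the data on the compact set, a single global modulus of continuity: with
\[
\psi_y(x)=\sup_{z\in K}\bigl\{f(z)+\langle G(z),x-z\rangle\bigr\}-f(y)-\langle G(y),x-y\rangle,
\qquad
\omega_0(t)=\sup_{0<|x-y|\le t}\frac{\psi_y(x)}{|x-y|},
\]
compactness (via the equivalence of $(CW^1)$ with the sequential condition $(SCW^1)$ on $K$) gives $\omega_0(t)\to 0$ as $t\to 0^+$. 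Taking the concave envelope $\omega_1$ of $\omega_0$ and $\varphi_1(t)=\int_0^t\omega_1$, one sets
\[
g(x)=\inf_{y\in K}\bigl\{f(y)+\langle G(y),x-y\rangle+2\varphi_1(|x-y|)\bigr\},\qquad F=\operatorname{conv}(g).
\]
A second-difference estimate $g(x+h)+g(x-h)-2g(x)\le 2\varphi_1(2|h|)$, inherited by the convex envelope, forces $F\in C^{1,\omega_1}(X)$; and since $m\le F\le g$ with equality on $K$, one reads off $(F,\nabla F)=(f,G)$ there, with $\textrm{Lip}(F)\le 5\|G\|_\infty$ coming from $\omega_1\le 2\|G\|_\infty$. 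So the missing idea in your proposal is this \emph{global} formula driven by a data-dependent modulus, rather than any local smoothing near $K$; until you supply something of that kind, the sufficiency proof is incomplete.
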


This theorem can be viewed as a particular case of the following result. We let $C^{1, u}_{b}(X)$ stand for the class of all differentiable functions $f:X\to\R$ such that their gradients $\nabla f:X\to X$ are uniformly continuous on each bounded subset of $X$. If $\omega$ is a modulus of continuity, we also define $C^{1, \omega}(X)$ as the set of all differentiable functions $f:X\to\R$ such that for some $M>0$ we have $|\nabla f(x)-\nabla f(y)|\leq M\omega(|x-y|)$ for all $x, y\in X$.

\begin{thm}\label{C1ubBoundedSubsetsHilbertSpace}
Given a Hilbert space $X$, a bounded subset $B$ of $X$, and two functions $f:B\to\R$, $G:B\to X$ such that $G$ is bounded, a necessary and sufficient condition for the existence of a convex function $F\in C^{1, u}_{b}(X)$ such that $(F, \nabla F)=(f, G)$ on $B$ is that the $1$-jet $(f, G)$ satisfies:
\begin{enumerate}
\item[($C$)] $f(x)\geq f(y)+ \langle G(y), x-y\rangle$ for all $x, y\in B$;
\item[($SCW^1$)] if $(x_n)$, $(y_n)$ are sequences in $B$ and $\lim_{n\to\infty} \left( f(x_n)-f(y_n)- \langle G(y_n), x_n-y_n\rangle \right)=0$ then $\lim_{n\to\infty} \left( G(x_n)-G(y_n) \right)=0$.
\end{enumerate} 
Furthermore, whenever these conditions are satisfied, the extension $F$ can be taken to be Lipschitz, with $\textrm{Lip}(F)\leq 5\sup_{z\in B}|G(z)|$.
\end{thm}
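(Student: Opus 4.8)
The plan is to prove necessity directly from convexity together with the uniform continuity of the gradient, and to obtain sufficiency by extracting from the qualitative condition $(SCW^1)$ a genuine modulus of continuity, thereby reducing the problem to a $C^{1,\omega}$ convex extension theorem already available in \cite{AzagraMudarraExplicitFormulas}. Throughout I would write $\Delta(x,y)=f(x)-f(y)-\langle G(y),x-y\rangle$, and similarly $\Delta_F(x,y)=F(x)-F(y)-\langle \nabla F(y),x-y\rangle$ for a given $F$.

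\emph{Necessity.} If $F\in C^{1,u}_b(X)$ is convex with $(F,\nabla F)=(f,G)$ on $B$, then $(C)$ is the first order convexity inequality restricted to $B$. For $(SCW^1)$ the key is the elementary estimate, valid for every convex $F\in C^1$ and every $h>0$,
\begin{equation*}
|\nabla F(x)-\nabla F(y)|\leq \frac{\Delta_F(x,y)}{h}+\omega_F(h),
\end{equation*}
where $\omega_F$ is a modulus of uniform continuity of $\nabla F$ on a bounded set containing $x,y$ and their $h$-neighbourhoods; I would obtain it by testing the tangency inequality of $F$ at $y$ against the point $x-h\,u$ with $u=(\nabla F(y)-\nabla F(x))/|\nabla F(x)-\nabla F(y)|$ and bounding $F(x-hu)$ from above via the fundamental theorem of calculus. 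Given sequences as in $(SCW^1)$, they lie in a fixed bounded set; for $\varepsilon>0$ one first fixes $h$ with $\omega_F(h)<\varepsilon/2$ and then uses $\Delta_F(x_n,y_n)\to 0$ to make the first term $<\varepsilon/2$ for large $n$, so $\nabla F(x_n)-\nabla F(y_n)\to 0$.

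\emph{Sufficiency.} First I would observe that the hypotheses force $f$ and $G$ to be uniformly continuous on $B$: condition $(C)$ applied in both directions gives $|f(x)-f(y)|\leq \sup_B|G|\,|x-y|$, and the chain
\begin{equation*}
0\leq \Delta(x,y)\leq \langle G(x)-G(y),x-y\rangle\leq 2\sup_B|G|\,|x-y|
\end{equation*}
shows that $|x_n-y_n|\to 0$ forces $\Delta(x_n,y_n)\to 0$, whence $G(x_n)-G(y_n)\to 0$ by $(SCW^1)$. Thus $f,G$ extend uniquely to uniformly continuous maps on $\overline{B}$, and $(C)$, $(SCW^1)$ pass to $\overline{B}$ by a routine limiting argument, so one may assume $B$ closed and bounded (but, crucially, not compact). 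The heart of the matter is then to replace $(SCW^1)$ by a quantitative bound. I would define, for $t>0$,
\begin{equation*}
\rho(t)=\sup\{\,|G(x)-G(y)|:x,y\in B,\ \Delta(x,y)\leq t\,\},
\end{equation*}
which is non-decreasing and bounded by $2\sup_B|G|$. If $\rho(t)$ did not tend to $0$ as $t\to 0^+$, one could pick $t_n\to 0$ and $x_n,y_n\in B$ with $\Delta(x_n,y_n)\leq t_n$ yet $|G(x_n)-G(y_n)|$ bounded below, contradicting $(SCW^1)$; hence $\rho(0^+)=0$. Choosing a continuous, strictly increasing majorant $\widetilde\rho\geq\rho$ with $\widetilde\rho(0)=0$ and setting $\varphi=\widetilde\rho^{\,-1}$, I would obtain a modulus $\varphi$ with
\begin{equation*}
f(x)-f(y)-\langle G(y),x-y\rangle\ \geq\ \varphi\big(|G(x)-G(y)|\big)\qquad(x,y\in B).
\end{equation*}
This is precisely the hypothesis of the $C^{1,\omega}$ convex Whitney extension theorem for the modulus $\omega$ associated with $\varphi$ (replacing $\varphi$ by a suitable concave-type majorant if the theorem's normalisation requires it). Applying that theorem would yield a convex $F\in C^{1,\omega}(X)\subseteq C^{1,u}_b(X)$ with $(F,\nabla F)=(f,G)$ on $B$, and the explicit formula there would deliver the bound $\mathrm{Lip}(F)\leq 5\sup_{z\in B}|G(z)|$.

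The main obstacle is the modulus-extraction step. In $\R^n$ one could lean on local compactness of $B$, but here $B$ is only closed and bounded, so the uniform smallness of $|G(x)-G(y)|$ as $\Delta(x,y)\to 0$ must be squeezed purely out of the sequential statement $(SCW^1)$; one must then check that the resulting $\varphi$, hence $\omega$, genuinely meets the structural requirements of the $C^{1,\omega}$ extension theorem rather than merely being an abstract increasing function.
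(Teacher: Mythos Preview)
Your necessity argument is essentially the paper's: both test the convexity inequality at a point $x+hv$ with $v$ the unit vector along $\nabla F(y)-\nabla F(x)$ and combine with uniform continuity of $\nabla F$; the paper simply chooses $h=\sqrt{\alpha_n}$ in one shot rather than your two-step $\varepsilon$ argument. Your sufficiency, however, takes a genuinely different route. The paper does \emph{not} reduce to a black-box $C^{1,\omega}$ theorem via a modulus relating $|G(x)-G(y)|$ to $\Delta(x,y)$. Instead it extracts a modulus from a different quantity: setting $\psi_y(x)=m(x)-f(y)-\langle G(y),x-y\rangle$ with $m$ the minimal convex extension, it shows via $(SCW^1)$ that $\omega_0(t)=\sup\{\psi_y(x)/|x-y|:0<|x-y|\le t\}$ tends to $0$, takes its concave envelope $\omega_1$, and then builds $F=\mathrm{conv}\big(\inf_{y\in B}\{f(y)+\langle G(y),\cdot-y\rangle+2\int_0^{|\cdot-y|}\omega_1\}\big)$ by hand, checking $C^{1,\omega_1}$ regularity directly. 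The advantage of the paper's choice of modulus is that it plugs straight into the inf--conv construction with no conversion step; your $\varphi=\widetilde\rho^{\,-1}$ must still be rewritten in the form $s\mapsto c\,s\,\omega^{-1}(c's)$ required by the hypothesis $(CW^{1,\omega})$ of \cite{AzagraMudarraExplicitFormulas}, which is doable (e.g.\ via $\omega^{-1}(s)=\min\{s,\inf_{t\ge s}\varphi(t)/t\}$ on the relevant range) but is exactly the gap you flag at the end. The advantage of your route is modularity: once that conversion is made precise, the result drops out of an existing theorem. One caution on the ``furthermore'': the constant $5$ in $\mathrm{Lip}(F)\le 5\sup_B|G|$ comes in the paper from the explicit formula ($\omega_1\le 2\|G\|_\infty$ forces each competitor to be $5\|G\|_\infty$-Lipschitz); if you invoke \cite{AzagraMudarraExplicitFormulas} as a black box you should verify that the constant there is indeed $5$ and not some other absolute multiple.
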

Obviously one can take $x_n=x$ and $y_n=y$ in condition $(SCW^1)$, so it is clear that this condition is generally  stronger than $(CW^1)$. But in the case of a compact set $K$, these conditions are equivalent (under the continuity assumption on $f$ and $G$). Indeed, suppose that $(CW^1)$ holds and we are given two sequences $(x_n)$, $(y_n)\subseteq K$ such that $\lim_{n\to\infty} \left( f(x_n)-f(y_n)- \langle G(y_n), x_n-y_n\rangle \right)=0$. If we do not have $\lim_{n\to\infty} \left(G(x_n)-G(y_n) \right)=0$ then we can take subsequences converging to points $x, y\in K$ respectively such that $f(x)-f(y)-\langle G(y), x-y\rangle=0$ and $|G(x)-G(y)|>0$, and so condition $(CW^1)$ fails. Thus Theorem \ref{C1ubBoundedSubsetsHilbertSpace} generalizes Theorem \ref{C1CompactHilbertSpace} (which in turn implies Theorem \ref{C1CompactRn}).
\begin{proof}[Proof of Theorem \ref{C1ubBoundedSubsetsHilbertSpace}]
We start by proving that $(SCW^1)$ is a necessary condition. Let $F \in C^{1, u}_{b}(X)$ be a convex function and assume, for the sake of contradiction, that there are two sequences $(x_n),$ $(y_n)\subset B$ and some $\varepsilon>0$ for which
$$
\alpha_n:= f(x_n)-f(y_n)- \langle \nabla F(y_n), x_n-y_n\rangle  \to 0, \quad \text{and} \quad   |\nabla F(x_n)-\nabla F(y_n)| \geq \varepsilon \quad \text{for all} \quad n.
$$
By convexity and the necessity of condition $(CW^1)$ in Theorem \ref{C1CompactRn} we must have $\alpha_n>0$ for all $n\in\N$.
Let us set, for every $n$,
$$
v_n:= \frac{\nabla F(y_n)- \nabla F(x_n) }{|\nabla F(y_n)- \nabla F(x_n) |}.
$$
By convexity of $F$ we obtain
\begin{align*}
 \sqrt{\alpha_n} \langle \nabla F (x_n & + \sqrt{\alpha_n} v_n),  v_n \rangle \geq  F(x_n+\sqrt{\alpha_n} v_n ) - F(x_n) \\
 &  \geq F(y_n)+ \langle \nabla F(y_n), x_n+\sqrt{\alpha_n} v_n -y_n \rangle - F(x_n) \\
& = -\alpha_n + \sqrt{\alpha_n} \langle \nabla F(y_n),  v_n \rangle 
\end{align*}
for all $n$.
Hence we deduce
$$
\langle \nabla F (x_n + \sqrt{\alpha_n} v_n)- \nabla F(x_n),  v_n \rangle \geq - \sqrt{\alpha_n}+ |\nabla F(y_n)- \nabla F(x_n) | \geq - \sqrt{\alpha_n}+ \varepsilon.
$$
Since $\lim_n \alpha_n=0,$ the above inequality contradicts the fact that $\nabla F$ is uniformly continuous on bounded sets. Thus condition $(SCW^1)$ is necessary. The necessity of condition $(C)$ is obvious.

\medskip

Now assume that $G$ is bounded on $B$ and the pair $(f,G):B \to \R \times X$ satisfies conditions $(C)$ and $(SCW^1)$ on $B$. Using condition $(C)$ we have that
$$
\langle G(y), x-y \rangle \leq f(x)-f(y) \leq \langle G(x), x-y \rangle \quad x,y\in B,
$$
and this implies that $f$ is Lipschitz on $B.$ In particular, $f$ is bounded on $B.$ For each $y\in B$ let us define $\psi_y:X\to\R$ by
$$
\psi_y(x)=\sup_{z\in B}\{f(z)+\langle G(z), x-z\rangle -f(y)-\langle G(y), x-y\rangle\}.
$$
Since $f$ and $G$ are bounded it is clear that $\psi_y$ is everywhere finite. Also, because $\psi_y$ is the supremum of a family of convex $C$-Lipschitz functions, where $C:=2\|G\|_{\infty}=2\sup_{z\in B}|G(z)|$, we have that $\psi_y$ is convex and $C$-Lipschitz for every $y\in B$. In particular, also using  condition $(C)$, we obtain 
\begin{equation}\label{estimate xy for psiyx}
\psi_y(y)=0\leq \psi_y(x)\leq C |x-y| \: \textrm{ for all } \: x\in X, \, y\in B.
\end{equation}

Now let us consider the function $\omega_0:(0, \infty)\to [0,\infty)$ defined by
$$
\omega_0(t)=\sup\left\{\frac{\psi_y(x)}{|x-y|} \, : \, 0<|x-y|\leq t, \, x\in X, \, y\in B\right\}.
$$
It is obvious that $\omega_0(s)\leq \omega_0(t)$ for all $0<s<t$, and $\omega_0(t)\leq C$ for all $t\in [0, \infty)$. We also have the following.
\begin{lemma}
$\lim_{t\to 0^{+}}\omega_0(t)=0$.
\end{lemma}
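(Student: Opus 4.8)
The plan is to argue by contradiction. Since $\omega_0$ is nondecreasing and bounded above by $C$, the limit $L := \lim_{t\to 0^{+}}\omega_0(t) = \inf_{t>0}\omega_0(t)$ exists and is finite, and I would assume $L>0$ in order to manufacture a violation of $(SCW^1)$. The heuristic is that a value of $\omega_0$ that stays bounded away from $0$ as $t\to 0^{+}$ produces pairs of nearby points along which the ``first-order defect'' vanishes while the corresponding gradients remain separated, which is exactly what $(SCW^1)$ forbids.

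First I would unwind what $L>0$ means. Because $\omega_0(t)\geq L$ for every $t>0$, for each $n$ there exist $x_n\in X$ and $y_n\in B$ with $0<|x_n-y_n|\leq 1/n$ and $\psi_{y_n}(x_n)/|x_n-y_n|\geq L/2$. Writing $h_n:=x_n-y_n$ (so $|h_n|\to 0$) and using that $\psi_{y_n}(x_n)$ is a supremum over $z\in B$, I would select a near-maximizer $z_n\in B$ with
\[
f(z_n)+\langle G(z_n), x_n-z_n\rangle - f(y_n)-\langle G(y_n), x_n-y_n\rangle \;\geq\; \psi_{y_n}(x_n)-\tfrac{L}{8}|h_n| \;\geq\; \tfrac{3L}{8}|h_n|.
\]
Converting the defining supremum into this honest inequality is the one genuinely delicate bookkeeping point, since $B$ need not be compact and the supremum may fail to be attained; everything else is algebra.

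Next I would expand the left-hand side using $x_n=y_n+h_n$ and regroup the linear terms. Since $x_n-z_n=(y_n-z_n)+h_n$, the displayed quantity equals $-a_n+\langle G(z_n)-G(y_n), h_n\rangle$, where
\[
a_n:=f(y_n)-f(z_n)-\langle G(z_n), y_n-z_n\rangle.
\]
Condition $(C)$ applied to the pair $(y_n,z_n)$ gives $a_n\geq 0$, hence $\langle G(z_n)-G(y_n), h_n\rangle\geq \tfrac{3L}{8}|h_n|$, and Cauchy--Schwarz forces $|G(z_n)-G(y_n)|\geq \tfrac{3L}{8}$ for all $n$. The very same inequality yields $0\leq a_n\leq |G(z_n)-G(y_n)|\,|h_n|\leq C|h_n|\to 0$, so $f(y_n)-f(z_n)-\langle G(z_n), y_n-z_n\rangle\to 0$.

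Finally, both $(y_n)$ and $(z_n)$ lie in $B$, so condition $(SCW^1)$ (applied with $y_n$ and $z_n$ playing the roles of its $x_n$ and $y_n$) forces $G(y_n)-G(z_n)\to 0$, contradicting $|G(z_n)-G(y_n)|\geq 3L/8>0$. Therefore $L=0$, which is the assertion. I expect the main obstacle to be purely technical: choosing the near-maximizers $z_n$ correctly and tracking signs carefully so that $(C)$ delivers $a_n\geq 0$ and $(SCW^1)$ can be invoked on the right sequences.
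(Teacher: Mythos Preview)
Your argument is correct and follows essentially the same route as the paper: argue by contradiction, select sequences $y_n\in B$, $x_n\in X$ with $|x_n-y_n|\to 0$ but $\psi_{y_n}(x_n)/|x_n-y_n|$ bounded below, choose near-maximizers $z_n\in B$, then use $(C)$ to extract a uniform lower bound on $|G(z_n)-G(y_n)|$ while the defect $a_n=f(y_n)-f(z_n)-\langle G(z_n),y_n-z_n\rangle$ tends to $0$, contradicting $(SCW^1)$. The only differences are cosmetic (you bound $a_n$ directly via Cauchy--Schwarz, whereas the paper routes through $\psi_{y_n}(x_n)\to 0$), and your bookkeeping with the near-maximizer is slightly cleaner.
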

\begin{proof}
Suppose $\limsup_{t\to 0^{+}}\omega_0(t)>0$. Then there exist $\varepsilon>0$, a sequence of numbers $(t_n)\searrow 0$, and two sequences of points $(y_n)\subset B$ and $(x_n)\subset X$ such that $x_n\in B(y_n, t_n)$ and
$$
\frac{\psi_{y_n}(x_n)}{|x_n-y_n|}\geq \varepsilon
$$
for all $n\in\N$. By approximating the supremum defining $\psi_{y_n}(x_n)$ we may also find sequences $(z_n)\subset B$ and $(\delta_n) \subset [0,1]$ such that $\lim_{n\to\infty}\delta_n=0$ and
\begin{equation}\label{expression for psiynxn}
\psi_{y_n}(x_n)=f(z_n)+\langle G(z_n), x_n-z_n\rangle -f(y_n)-\langle G(y_n), x_n-y_n\rangle +\delta_{n}|x_n-y_n|.
\end{equation}
Then, using condition $(C)$, we deduce that
\begin{eqnarray*}
& &0<\varepsilon\leq\frac{\psi_{y_n}(x_{n})}{|x_n-y_n|}=
\frac{f(z_n)+\langle G(z_n), x_n-z_n\rangle -f(y_n)-\langle G(y_n), x_n-y_n\rangle}{|x_n-y_n|} +\delta_n
\\
& & =\frac{f(z_n)+\langle G(z_n), y_n-z_n\rangle-f(y_n) +\langle G(z_n)-G(y_n), x_n-y_n\rangle }{|x_n-y_n|} +\delta_n \\
& & \leq \frac{\langle G(z_n)-G(y_n), x_n-y_n\rangle }{|x_n-y_n|} +\delta_n \leq |G(z_n)-G(y_n)| +\delta_n,
\end{eqnarray*}
which implies 
\begin{equation}\label{Gzn-Gyn does not go to 0}
0<\varepsilon\leq \liminf_{n\to\infty}|G(z_n)-G(y_n)|.
\end{equation}
But on the other hand, since $|x_n-y_n|\to 0$ and $G$ is bounded, using \eqref{estimate xy for psiyx} and \eqref{expression for psiynxn} we also obtain
\begin{eqnarray*}
& & 0=\lim_{n\to\infty}\psi_{y_n}(x_n)=
\lim_{n\to\infty}  \left( f(z_n)+\langle G(z_n), x_n-z_n\rangle -f(y_n)-\langle G(y_n), x_n-y_n\rangle \right) \\
& & =\lim_{n\to\infty} \left( f(z_n)+\langle G(z_n), y_n-z_n\rangle -f(y_n) \right), 
\end{eqnarray*}
which by $(SCW^1)$ implies $\lim_{n\to\infty} \left( G(z_n)-G(y_n) \right) =0$, in contradiction with  \eqref{Gzn-Gyn does not go to 0}.
\end{proof}

Now let us set $\omega_0(0)=0$. If $\omega_0:[0, \infty)\to [0, \infty)$ is constantly $0$ then $G$ is constant, and for any $y_0\in B$ the function $F(x)=f(y_0)+\langle G(y_0), x-y_0\rangle$ has the property that $(F, \nabla F)=(f, G)$ on $B$. Therefore we can assume that $\omega_0$ is not constant, and define $\omega_1:[0, \infty)\to [0, \infty)$ by
$$
\omega_1(t)=\inf\{ g(t) \, | \, g: [0, \infty)\to\R \textrm{ is concave and } g\geq \omega_0\}
$$
(the concave envelope of $\omega_0$). Then $\omega_1$ is a nondecreasing continuous concave modulus of continuity  such that $\omega_1\leq C$. Let us also set
$$
\varphi_1(t)=\int_{0}^{t}\omega_1(s)ds, \,\,\, t\in [0, \infty).
$$
The function $\varphi_1$ is convex and $C^1$, with a uniformly continuous derivative, and satisfies $\varphi_1(0)=0$. For each $y\in B$, let us define the function
$$
X\ni x\mapsto \varphi_y(x):=\varphi_1\left( |x-y|\right).
$$
\begin{lemma}
{\em The functions $\varphi_y:X\to [0, \infty)$ are of class $C^{1, \omega_1}(X)$, with 
$$
|\nabla\varphi_y (x)-\nabla\varphi_y(z)|\leq M\omega_1\left(|x-z|\right)
$$
for all $x, z\in X$, where $M$ is a constant independent of $y\in B$.}
\end{lemma}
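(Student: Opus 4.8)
The plan is to compute $\nabla\varphi_y$ explicitly and then reduce the desired estimate to two elementary properties of concave moduli of continuity. By the chain rule, for $x\neq y$ we have $\nabla\varphi_y(x)=\omega_1(|x-y|)\,\frac{x-y}{|x-y|}$, since $\varphi_1'=\omega_1$; and at $x=y$ the function $\varphi_y$ is differentiable with $\nabla\varphi_y(y)=0$, because $\varphi_1(t)/t\leq\omega_1(t)\to 0$ as $t\to 0^{+}$. In particular $|\nabla\varphi_y(x)|=\omega_1(|x-y|)$ for every $x$. Observe that this gradient formula depends on $y$ only through the translation $x\mapsto x-y$, which is what will make the final constant $M$ independent of $y$.

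To prove the modulus estimate I fix $x,z\in X$, write $u=x-y$ and $w=z-y$, assume without loss of generality $|u|\geq|w|$, and distinguish two regimes. If $|x-z|=|u-w|\geq|u|$, I simply use the triangle inequality together with $|\nabla\varphi_y(x)|=\omega_1(|u|)$, $|\nabla\varphi_y(z)|=\omega_1(|w|)$ and the monotonicity of $\omega_1$ to get $|\nabla\varphi_y(x)-\nabla\varphi_y(z)|\leq\omega_1(|u|)+\omega_1(|w|)\leq 2\omega_1(|u|)\leq 2\omega_1(|u-w|)$. In the complementary regime $|u-w|<|u|$ I split
$$\nabla\varphi_y(x)-\nabla\varphi_y(z)=\bigl(\omega_1(|u|)-\omega_1(|w|)\bigr)\frac{u}{|u|}+\omega_1(|w|)\Bigl(\frac{u}{|u|}-\frac{w}{|w|}\Bigr).$$
The first (radial) term is bounded, using that $\omega_1$ is nondecreasing and subadditive (being concave with $\omega_1(0)=0$), by $\omega_1(|u|)-\omega_1(|w|)\leq\omega_1(|u|-|w|)\leq\omega_1(|u-w|)$. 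For the second (angular) term I use the elementary inequality $\bigl|\frac{u}{|u|}-\frac{w}{|w|}\bigr|\leq\frac{2|u-w|}{\max(|u|,|w|)}$ together with the fact that $t\mapsto\omega_1(t)/t$ is nonincreasing (again a consequence of concavity and $\omega_1(0)=0$): this gives $\omega_1(|w|)\bigl|\frac{u}{|u|}-\frac{w}{|w|}\bigr|\leq\omega_1(|u|)\frac{2|u-w|}{|u|}=2|u-w|\frac{\omega_1(|u|)}{|u|}\leq 2\omega_1(|u-w|)$. Adding the two contributions yields $|\nabla\varphi_y(x)-\nabla\varphi_y(z)|\leq 3\omega_1(|x-z|)$, so the lemma holds with $M=3$.

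The routine verifications I would carry out separately are the two auxiliary facts about $\omega_1$ (monotonicity of $t\mapsto\omega_1(t)/t$ and subadditivity, both following from concavity and $\omega_1(0)=0$), the elementary estimate for the difference of two unit vectors, and the differentiability of $\varphi_y$ at $x=y$. The only genuinely delicate point is that a naive scaling bound on the angular term would blow up when $|x-z|$ is large relative to $|x-y|$ and $|z-y|$; the remedy is precisely the case split above, where in the large-separation regime one discards the splitting and uses the crude triangle inequality instead, which suffices because $|\nabla\varphi_y|=\omega_1(|\cdot-y|)$ is globally controlled by $\omega_1$.
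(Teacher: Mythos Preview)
Your proof is correct and follows the same radial--angular decomposition as the paper's own argument. The paper avoids your case split by observing directly that $|u|\geq\tfrac12(|u|+|w|)\geq\tfrac12|u-w|$ whenever $|u|\geq|w|$, so the monotonicity of $t\mapsto\omega_1(t)/t$ handles the angular term uniformly without any regime distinction; this yields $M=5$, whereas your case split recovers the slightly sharper constant $M=3$.
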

\begin{proof}
Since $$\nabla\varphi_y(x)=\omega_1(|x-y|)\frac{x-y}{|x-y|},$$ it is clearly enough to show that the function $X\ni x\mapsto \varphi(x):=\varphi_{1}(|x|)$ is of class $C^{1, \omega_1}(X)$.
Recall that $\omega_1$ is a concave, nondecreasing, modulus of continuity. In particular the function $(0, \infty)\ni t \mapsto \omega_1(t)/t$ is nonincreasing.
Fix $x, z\in X \setminus \lbrace 0 \rbrace$, and let us estimate $|\nabla \varphi(x)-\nabla \varphi(z) |.$ Assume that $|x| \geq |z|$ for instance. Then
\begin{align*}
|\nabla \varphi(x)-\nabla \varphi(z) | & = \left| \omega_1(|x|) \frac{x}{|x|} - \omega_1(|z|) \frac{z}{|z|} \right| \leq |\omega_1(|x|)-\omega_1(|z|)| \left| \frac{x}{|x|} \right| + \omega_1(|x|) \left| \frac{x}{|x|} -  \frac{z}{|z|} \right | \\
& \leq \omega_1(|x-z|)  + \omega_1(|x|) \frac{ \left| |z| \, x - |x| \, z  \right|}{|x||z|} \leq \omega_1(|x-z|) + 2 \, \omega_1(|x|) \frac{ \left|  x -  z  \right|}{|x|}.
\end{align*}
Now observe that $|x| \geq \frac{1}{2}|x| + \frac{1}{2}|z| \geq \frac{1}{2}|x-z|$, and therefore $\omega_1(|x|)/|x| \leq \omega_1(\frac{1}{2}|x-z|)/(\frac{1}{2}|x-z|). $ We obtain
$$
|\nabla \varphi(x)-\nabla \varphi(z) |  \leq \omega_1(|x-z|) + 2  \,\omega_1 \left( \tfrac{1}{2}|x-z| \right) \frac{  |  x -  z   |}{\frac{1}{2}|x-z|} \leq 5 \, \omega_1(|x-z|).
$$
On the other hand, if one of the points $x, z$ is $0$, for instance $z=0$, then
$$
|\nabla\varphi(x)-\nabla\varphi(0)|=\left| \omega_1(|x|)\frac{x}{|x|}-0\right|=\omega_1(|x|),
$$
so in either case we have what we need, with $M=5$.
\end{proof}

Now consider the functions $g:X\to\R$ defined by
$$
g(x)=\inf_{y\in B}\left\{ f(y)+\langle G(y), x-y\rangle+2\varphi_y(x) \right\},
$$
and
$$
F=\textrm{conv}(g)
$$
(the convex envelope of $g$, that is to say, the largest convex function which is less than or equal to $g$).

As in \cite[Lemma 4.14]{AzagraMudarraExplicitFormulas} it is not difficult to check that
$$
g(x+h)+g(x-h)-2g(x)\leq 2\varphi_1\left(2|h|\right)
$$
for all $x, h\in X$, which implies, as in \cite[Theorem 2.3]{AzagraMudarraExplicitFormulas}, that
$$
F(x+h)+F(x-h)-2F(x) \leq 2\varphi_1\left(2|h|\right)
$$
for all $x, h\in X$. Since $F$ is convex this inequality implies that $F\in C^{1, \omega_1}(X)$ (see \cite[Proposition 4.5]{AzagraMudarraExplicitFormulas}), and in particular $F\in C^{1, u}_{b}(X)$.

Let us see that $(F, \nabla F)=(f, G)$ on $B$. We first observe that, by concavity of $\omega_1$, we have
$$
\frac{1}{2}\omega_1(t) t\leq\int_{0}^{t}\omega_1(s)ds=\varphi_1(t),
$$
hence
$$
t\omega_0(t)\leq t\omega_1(t)\leq 2\varphi_1(t).
$$
Therefore, setting
$$
m(x):=\sup_{z\in B}\left\{ f(z)+\langle G(z), x-z\rangle\right\}
$$
(the minimal extension of the jet $(f, G)$) we have
\begin{eqnarray*}
& & f(y)+\langle G(y), x-y\rangle +2\varphi_y(x)=
f(y)+\langle G(y), x-y\rangle +2\varphi_1(|x-y|) \\
& & \geq f(y)+\langle G(y), x-y\rangle +|x-y|\omega_0 (|x-y|)
\geq f(y)+\langle G(y), x-y\rangle +\psi_y(x)=m(x),
\end{eqnarray*}
hence
$$
m(x)\leq g(x)
$$
for all $x\in X$, and since $m$ is convex this implies that 
$$
m\leq F\leq g \,\,\, \textrm{ on } X.
$$
But we also have 
$$
f\leq m\leq g\leq f \,\,\, \textrm{ on } B.
$$
Therefore $F=f$ on $B$. On the other hand, since $m \leq F$ on $X$ and $F=m$ on $B$, where $m$ is convex and $F$ is differentiable on $X$, we deduce that $m$ is differentiable on $B$ with $\nabla m (x)= \nabla F (x)$ for all $x\in B$. But it is clear, by definition of $m$, that $G(x) \in \partial m (x)$ (the subdifferential of $m$ at $x$) for every $x\in B$, so we must have $\nabla F(x)=G(x)$  for every $x\in B$.

Finally let us see that $F$ is $5\|G\|_\infty$-Lipschitz. It is clear that $\varphi_y$ is $2C$-Lipschitz for all $y\in B$, and this implies that $g$ is $5\|G\|_\infty$-Lipschitz. Besides, we have that
$$
F(x)=\textrm{conv}(g)(x)=\inf \left\lbrace \sum_{j=1}^{n}\lambda_{j} g(x_j) \, : \, \lambda_j\geq 0,
\sum_{j=1}^{n}\lambda_j =1, x=\sum_{j=1}^{n}\lambda_j x_j, n\in\N \right\rbrace.
$$
Then, given $x, h \in X$ and $\varepsilon>0,$ we can pick $n\in \N, \: x_1, \ldots, x_n \in X$ and $\lambda_1, \ldots, \lambda_n >0$ such that 
$$
F(x) \geq \sum_{i=1}^n \lambda_i g(x_i) - \varepsilon, \quad \sum_{i=1}^n \lambda_i=1 \quad \text{and} \quad \sum_{i=1}^n \lambda_i x_i = x.
$$
Because $x+h = \sum_{i=1}^n \lambda_i ( x_i + h ),$ we have $F( x + h ) \leq \sum_{i=1}^n \lambda_i g( x_i + h)$, which leads us to
$$
F(x+h)-F(x) \leq \sum_{i=1}^n \lambda_i \left( g(x_i+h) -g(x_i) \right) + \varepsilon\leq  5\|G\|_\infty|h|+\varepsilon,
$$
and since $\varepsilon>0$ is arbitrary, we get $F(x+h)-F(x)\leq 5\|G\|_\infty |h|$ for all $x, h\in X$, which means that $\lip(F) \leq 5\|G\|_\infty$.
\end{proof}

\medskip

\begin{center}
{\bf Acknowledgment}
\end{center}

We thank the referee for some suggestions that helped us improve the proof.

\medskip


\end{document}